\titleformat*{\section}{\large\bfseries}
\titleformat*{\subsection}{\normalsize\bfseries}
\newtheorem{thm}{Theorem}[section]
\newtheorem*{thm*}{Theorem}
\newtheorem{cor}[thm]{Corollary}
\newtheorem*{cor*}{Corollary}
\newtheorem{prop}[thm]{Proposition} 
\newtheorem{lem}[thm]{Lemma}
\newtheorem*{claim*}{Claim}
\newtheorem*{question*}{Question}
\theoremstyle{definition}
\theoremstyle{remark}
\DeclareMathSymbol{\shortminus}{\mathbin}{AMSa}{"39}
	\def\command@factory#1{%
		\expandafter\def\csname cal#1\endcsname{\mathcal{#1}}
		\expandafter\def\csname frak#1\endcsname{\mathfrak{#1}}
		\expandafter\def\csname scr#1\endcsname{\mathscr{#1}}
		\expandafter\def\csname bb#1\endcsname{\mathbb{#1}}
		\expandafter\def\csname rm#1\endcsname{\mathrm{#1}}
		\expandafter\def\csname bf#1\endcsname{\mathbf{#1}}
	}
\newcommand{\aut}[1]{{\mathrm{Aut}\left(#1\right)}}
\title{Hyperbolic hyperbolic-by-cyclic groups are cubulable}
\author{François Dahmani \and Suraj Krishna M S \and Jean Pierre Mutanguha}
\date{\today}
\begin{document}

\maketitle

\begin{abstract}
We show that the mapping torus of a hyperbolic group by a hyperbolic automorphism is cubulable. Along the way, we (i) give an alternate proof of Hagen and Wise's theorem that hyperbolic free-by-cyclic groups are cubulable, and (ii) extend to the case with torsion Brinkmann's thesis that a torsion-free hyperbolic-by-cyclic group is hyperbolic if and only if it does not contain $\mathbb{Z}^2$-subgroups.
\end{abstract}

\section{Introduction}\label{sec;intro}

 In this note, we prove the following:

\begin{thm*}[\cref{cor;main_result}]
 Hyperbolic hyperbolic-by-cyclic groups  are  cubulable.
\end{thm*}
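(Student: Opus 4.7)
The plan is to verify the Bergeron--Wise criterion for cubulability of a hyperbolic group: it suffices to exhibit a collection of quasiconvex codimension-1 subgroups of $G = H \rtimes_\phi \bbZ$ whose limit sets in $\partial G$ separate every pair of distinct boundary points; by Sageev's construction the resulting dual cube complex then admits a proper cocompact $G$-action. Thus the entire problem is shifted to producing enough codimension-1 walls in $G$.

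To produce walls, I would exploit the mapping torus structure. The fiber $H$ is itself codimension-1 but almost never quasiconvex, so one cannot use it directly. Instead, the natural candidates are \emph{fibered} subgroups of the form $L \rtimes \bk{t^n h}$ where $L < H$ is quasiconvex and some conjugate of $\phi^n$ preserves $L$, so that $L$ becomes a fiber of a subordinate mapping torus inside $G$. The expectation is that whenever $L$ is quasiconvex and codimension-1 in $H$, and the automorphism restricted to $L$ is hyperbolic on $L$, the mapping torus $L \rtimes \bk{t^n h}$ will be quasiconvex and codimension-1 in $G$: quasiconvexity should follow from general principles about hyperbolic extensions of hyperbolic groups (and Mj-type Cannon--Thurston boundary considerations), and codimension-1 of the extension should follow from codimension-1 of $L$ in $H$ together with $\phi^n$-invariance, since the wall in $H$ can then be thickened by a $t^n$-orbit into a wall in $G$. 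Separation of a given pair $\xi \ne \eta$ of $\partial G$ reduces, through the Cannon--Thurston map $\partial H \to \partial G$ and the dynamics of $t$ on $\partial G$, to separating certain pairs in $\partial H$ by the limit set of $L$.

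The first technical step is therefore to show that $G$ hyperbolic forces strong structural control on $\phi$: in particular $\phi$ must have infinite order in $\Aut{H}/\mathrm{Inn}(H)$ (otherwise, up to a finite power, $G$ virtually contains $H \times \bbZ$, contradicting the no-$\bbZ^2$ condition coming from hyperbolicity of $G$, which is exactly the torsion-version of Brinkmann's criterion announced in the abstract). Once $\phi$ is hyperbolic on $H$ in this strong sense, the automorphism organizes quasiconvex subgroups of $H$ into orbits that can be analyzed.

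The main obstacle is the production of $\phi$-periodic quasiconvex codimension-1 subgroups of $H$ in sufficient abundance to separate $\partial G$. For $H$ free this is essentially what train track theory does and what Hagen--Wise exploit, which is why the authors give a parallel alternate proof in that case. For general hyperbolic $H$ one has no BFH theory available, and the heart of the paper is presumably a replacement: one would look for canonical invariant structures (JSJ decompositions, attracting/contracting laminations, periodic quasiconvex malnormal families) whose leaves or vertex groups yield periodic codimension-1 subgroups, and argue that the walls coming from them, together with their $G$-translates, suffice to separate $\partial G$. That existence/density statement for periodic quasiconvex walls in a general hyperbolic group under a hyperbolic automorphism is where I expect the principal difficulty of the proof to lie; once it is established, feeding the resulting wall system into Bergeron--Wise and Sageev gives the cube complex and concludes cubulability of $G$.
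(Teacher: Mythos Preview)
Your proposal is not a proof but an outline with the crucial step left open: you yourself identify that producing enough $\phi$-periodic quasiconvex codimension-$1$ subgroups of a general hyperbolic $H$ is ``where the principal difficulty lies,'' and you offer no mechanism for this beyond speculation about JSJ decompositions and laminations. For arbitrary hyperbolic $H$ there is no analogue of train-track theory, and the wall-production machinery you would need is exactly the content of Hagen--Wise's (notoriously difficult) general free-by-cyclic paper, which the present paper explicitly does \emph{not} follow or extend. So as written there is a genuine gap: the heart of the argument is missing.

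The paper's route is entirely different and avoids directly verifying the Bergeron--Wise criterion. First it reduces to the torsion-free case by showing (\cref{prop;virtFP}) that if $G\rtimes_\phi\bbZ$ has no $\bbZ^2$ then $G$ has a characteristic finite-index subgroup which is a free product of free and closed surface groups; cubulability then passes down and back up. In the torsion-free case (\cref{prop;tf_cubulable}) the proof is by induction on the Kurosh--Grushko rank of $G$. The base case (rank~$1$) forces $G$ to be a closed surface group (\cref{prop;free_prod_surfaces}), so $G\rtimes\bbZ$ is a closed hyperbolic $3$-manifold group, already known to be cubulable. For the inductive step one chooses a maximal $\phi$-weakly-invariant proper free factor system $\calB$; a power $\psi$ of $\phi$ is then fully irreducible and atoroidal relative to $\calB$, and the peripheral suspensions $B_i\rtimes\bbZ$ are cubulable by induction. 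The relative cubulation theorem of Dahmani--Krishna (\cref{thm;relative_cubulation}) gives a cocompact action of $G\rtimes_\psi\bbZ$ on a CAT(0) cube complex with cell stabilisers commensurable to the $B_i\rtimes\bbZ$, and Groves--Manning's \cref{thm;groves_manning} upgrades this to a genuine cubulation. A short case analysis using Hsu--Wise (\cref{thm;hsu_wise}) handles the sporadic Kurosh co-rank~$2$ situations. Thus the walls are never built by hand; they are inherited from the base cases and assembled via the relative/combination theorems.
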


A \emph{hyperbolic-by-cyclic} group is a semidirect product  $G\rtimes \mathbb{Z}$ of a hyperbolic group $G$ with the integers $\mathbb Z$. A group is \emph{cubulable} if it admits an isometric action on a  
CAT(0) cube complex that is cubical, proper, and cocompact. The repetition in the statement is intended: we assume that both $G$ and $G\rtimes \mathbb{Z}$ are hyperbolic (equivalently, $G$ is hyperbolic and $G\rtimes \mathbb{Z}$ does not contain $\mathbb{Z}^2$, see \cref{cor;alg_hyp}).
This restricts what $G$ can be.  

Emblematic cases of our theorem are known by outstanding works.
First and foremost, if $G$ is a closed surface group, then any hyperbolic extension $G\rtimes \mathbb{Z}$ is a closed hyperbolic 3-manifold group \cite{Thurston3dmflds}.
Its cubulation is due to  
independent works of Bergeron and Wise \cite{BW} --- using Kahn and Markovic's surface subgroup theorem \cite{KM}, and Dufour \cite{Dufour} --- using the immersed quasiconvex surfaces of Cooper, Long, and Reid \cite{CLR}.
Second, when $G$ is free, Hagen and Wise cubulated the mapping torus  $G\rtimes \mathbb{Z}$ of a fully irreducible hyperbolic automorphism~\cite{HW16}. 

Hagen and Wise also treat extensions of free groups by arbitrary hyperbolic automorphisms in \cite{HW15}, a notoriously difficult analysis. We  do not rely on, nor follow, that work. 
Instead, our proof uses the emblematic cases above in a telescopic argument that encompasses the case when $G$ is a torsion-free hyperbolic group (see \cref{prop;tf_cubulable}).
It provides a hopefully appreciated alternative. 

We adopt a relative viewpoint and bootstrap the relative cubulation of certain free-product-by-cyclic groups by the first two named authors 
\cite{DK23}; 
this uses recent work of Groves and Manning on improper actions on CAT(0) cube complexes \cite{GM} along with the malnormal combination theorem of Hsu and Wise \cite{HW}.
The need for the theory of train tracks (of free groups or free product automorphisms, see \cite{BH, FM}) is limited to absolute train tracks for the  fully irreducible case; it is encapsulated in the relative cubulation of free-product-by-cyclic groups \cite{DK23}.

For a hyperbolic group $G$ possibly with torsion, if there exists a hyperbolic extension $G\rtimes \mathbb{Z}$, then $G$ is virtually torsion-free (and residually finite) by \cref{prop;virtFP}. 
In particular, $G\rtimes \mathbb{Z}$ is virtually cubulable hyperbolic, and hence cubulable \cite[Lem.~7.14]{Wise_book}. As a consequence, we have:

\begin{cor*}
If a hyperbolic-by-cyclic group $\Gamma$ is hyperbolic, then:
\begin{enumerate}
    \item $\Gamma$ is virtually (compact) special \cite{Agol};
    \item $\Gamma$ is $\mathbb Z$-linear and its quasiconvex subgroups are separable \cite{Haglund_Wise}; 
    \item $\Gamma$ virtually surjects onto $F_2$ \cite{AntolinMinasyan};
    \item $\Gamma$ is conjugacy separable \cite{MinasyanZalesskii}; and
    \item $\Gamma$ admits Anosov representations \cite{DFWZ}.
\end{enumerate}
\end{cor*}
%Agol's theorem \cite{Agol}, therefore, implies that hyperbolic hyperbolic-by-cyclic groups are virtually (compact) special, and thus $\mathbb{Z}$-linear and their quasiconvex subgroups are separable \cite{Haglund_Wise}. 

We end this introduction with a question. \cref{prop;virtFP} states that a hyperbolic group is virtually a free product of free and surface groups whenever it admits a hyperbolic automorphism. 
However, the converse is false as can be seen from a hyperbolic triangle group or the free product of two finite groups --- these have finite outer automorphism groups.

\begin{question*}
Is there an algebraic characterisation of hyperbolic groups that admit hyperbolic automorphisms?
\end{question*} 

Note that Pettet characterised virtually free groups with finite outer automorphism groups \cite{pettet_finite_out}.

\subsection*{Acknowledgements}
We thank Mark Hagen for an insightful discussion, Daniel Groves for the reference to \cite[Lem.~7.14]{Wise_book}, and the referee for helpful suggestions.
We are grateful for the environment provided by the CRM during the 2023 semester program "Geometric Group Theory".

Fran\c{c}ois Dahmani was supported by ANR GoFR - ANR-22-CE40-0004, and IRL CRM-CNRS\,3457, while 
Suraj Krishna was supported by ISF grant number 1226/19.

\section{Free factor systems}

A \emph{free decomposition} of a group $G$ is an isomorphism $G \cong A_1*\cdots*A_k*F_r$, where $k\geq 0$, $r\geq 0$, each \emph{peripheral free factor} $A_i$ is not trivial, and $F_r$ is free with rank~$r$.
We call $\calA= (A_1, \dots, A_k)$ a \emph{free factor system} of $G$;
it is \emph{proper} unless $k \le 1$ and $r = 0$.
The integer $k+r$ is the \emph{Kurosh co-rank} 
of the free factor system~$\calA$.
A nontrivial group is \emph{freely indecomposable} if its free factor systems have Kurosh co-rank~1.

Assume $G$ is finitely generated for the rest of this section.
A \emph{Grushko decomposition} of $G$ is a free decomposition whose free factor system $\calA$ has  maximal Kurosh co-rank and peripheral free factors $A_i$ are not $\mathbb Z$; in that case, we call $\calA$ a \emph{Grushko free factor system} and its Kurosh co-rank is the \emph{Kurosh--Grushko rank} of $G$.

Recall the preorder of free factor systems of $G$: a free factor system $\calB=(B_1, \dots, B_\ell)$ is lower than $\calA$ if 
each $B_j$ is conjugate in $G$ to a subgroup of some~$A_i$.
In this case, a free decomposition with peripherals~$\calA$ refines to one with peripherals~$\calB$  
(as seen by the actions of $A_i$ on $T_\calB$, a Serre tree whose nontrivial vertex stabilisers are exactly the conjugates of all $B_j$), and 
the Kurosh co-rank of $\calB$ is at least that of $\calA$ (see \cite[Lem.~1.1]{DL} for a similar argument); if it is equal, then $\calA$ is also lower than~$\calB$. 

Let $\calB=(B_1, \dots, B_\ell)$ be a free factor system of $G$. A \emph{proper $(G, \calB)$-free factor} is a nontrivial point stabiliser of a nontrivial action of $G$ on a tree, for which  edge stabilisers are  trivial, and in which each $B_j$ is elliptic. In other words, it is a peripheral free factor $A_i$ in a free factor system $\calA$ that is higher than $\mathcal{B}$ in the preorder.

A minimal free factor system in this preorder is a Grushko free factor system; it is unique up to the preorder's equivalence relation.
So any automorphism preserves the Grushko free factor system $(A_1, \dots, A_k)$, i.e. it sends each $A_i$ to a conjugate of some $A_j$. A free factor system is \emph{periodic} with respect to $\phi \in \aut G$ if some (positive) power of $\phi$ preserves it.

\begin{lem}\label{lem;maximal_weak_inv}
 Suppose $G$ is a finitely generated group. 
 If  $\calB=(B_1, \dots, B_\ell)$ is a proper free factor system, then each $B_i$ has Kurosh--Grushko rank strictly lower than the Kurosh--Grushko rank of $G$.
 
 If $G$ has Kurosh--Grushko rank $\ge 2$, then any automorphism $\phi \colon G \to G$ has a free factor system that is maximal among $\phi$-periodic proper free factor systems.
\end{lem}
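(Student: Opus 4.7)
The first assertion can be obtained by refining the given decomposition with the Grushko decompositions of its pieces. Writing the hypothesis as $G \cong B_1 * \cdots * B_\ell * F_s$ with $\ell + s \geq 2$ (properness), and each $B_i \cong A_{i,1} * \cdots * A_{i,k_i} * F_{r_i}$ as its Grushko decomposition, the pieces $A_{i,j}$ together with the free part assemble into a free decomposition of $G$ of Kurosh co-rank $\sum_i \mathrm{rank}(B_i) + s$, where $\mathrm{rank}$ denotes the Kurosh--Grushko rank. Since the Grushko decomposition of $G$ attains the maximum possible Kurosh co-rank, we get
\[
\sum_{i=1}^{\ell} \mathrm{rank}(B_i) + s \;\leq\; \mathrm{rank}(G).
\]
If $\mathrm{rank}(B_{i_0}) = \mathrm{rank}(G)$ for some $i_0$, then combining with $\mathrm{rank}(B_j) \geq 1$ for each remaining nontrivial factor forces $\ell = 1$ and $s = 0$, contradicting properness; hence each $\mathrm{rank}(B_i)$ is strictly smaller than $\mathrm{rank}(G)$.

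For the second assertion, the plan is to exhibit a nonempty collection of weakly invariant proper free factor systems and then extract a maximal element by minimising Kurosh co-rank. Nonemptiness is automatic: as recalled immediately before the lemma, the Grushko free factor system $\calA_0$ of $G$ is preserved (up to conjugating components) by every automorphism of $G$, so it is weakly invariant under $\phi$; under the hypothesis $\mathrm{rank}(G)\geq 2$ it is also proper, because its Kurosh co-rank equals $\mathrm{rank}(G)\geq 2$. So the set $S$ of weakly invariant proper free factor systems is nonempty.

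To extract a maximum, I would choose $\calA^{*} \in S$ of minimum Kurosh co-rank; the minimum exists because Kurosh co-rank takes values in the positive integers and $S$ is nonempty. Suppose, for contradiction, that some $\calB \in S$ satisfies $\calA^{*} < \calB$ strictly in the preorder. The dichotomy recalled in the preliminaries says that whenever two comparable free factor systems have equal Kurosh co-rank they are equivalent, so the Kurosh co-rank of $\calB$ is strictly smaller than that of $\calA^{*}$, contradicting the choice of $\calA^{*}$. Therefore $\calA^{*}$ is maximal in $S$. The only mildly delicate point is this last strictness step, but it is precisely the contrapositive of the preliminary statement on equal Kurosh co-ranks, so no serious obstacle arises; the rest is bookkeeping with the Grushko refinement.
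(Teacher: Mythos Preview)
Your proof is correct and follows essentially the same approach as the paper's. For the first assertion the paper isolates a single factor, writing $G \cong B_i * H$ with $H$ nontrivial and invoking additivity of Kurosh--Grushko rank, whereas you refine all factors simultaneously to get the inequality $\sum_i \mathrm{rank}(B_i) + s \le \mathrm{rank}(G)$; both routes rest on the same Grushko fact. For the second assertion your argument and the paper's are identical: exhibit the Grushko system as a witness that the set of weakly invariant proper systems is nonempty, then pick one of minimal Kurosh co-rank and use the co-rank comparison from the preliminaries to conclude maximality.
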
 

\begin{proof}

Since $\calB$ is proper, $G \cong B_i*H$ for some nontrivial group $H$. 
By uniqueness of the Grushko decomposition, the Kurosh--Grushko rank of $G$ is the sum of those of $B_i$ and $H$.

For the second assertion, as the Kurosh--Grushko rank is at least 2, the Grushko free factor system is proper and $\phi$-periodic.
Restricting to $\phi$-periodic proper free factor systems, any one with the lowest Kurosh co-rank is maximal in the preorder.
\end{proof}

\section{Ingredients}\label{sec;ingredients}
Let $G$ be a torsion-free group.
%$\calB = (B_1, \ldots, B_\ell)$ a free factor system of~$G$, and $\psi \colon G \to G$ an automorphism.
For this section, we assume:
\begin{itemize}
    \item a free factor system $\calB = (B_1, \ldots, B_\ell)$ has Kurosh co-rank $\ge 3$;
    \item an automorphism $\psi \colon G \to G$ preserves $\calB$, denoted $\psi \in \aut{G,\calB}$;
    \item $\psi \in \aut{G,\calB}$ is \emph{relatively fully irreducible}, i.e.~any $\psi$-periodic (up to conjugacy) proper $(G,\calB)$-free factor must be conjugate to some $B_i$;
    \item $\psi \in \aut{G,\calB}$ is \emph{relatively atoroidal}, i.e.~any $\psi$-periodic conjugacy class of nontrivial elements in $G$ intersects some $B_i$.
\end{itemize}
%Suppose $\calB = (B_1, \ldots, B_\ell)$ is a maximal periodic proper free factor system (with respect to $\psi$) with Kurosh co-rank $\ge 3$; $\psi$ preserves $\calB$; and any $\psi$-periodic conjugacy class of nontrivial elements in $G$ intersects some peripheral free factor $B_i$.
%These assumptions are equivalent to: $\calB$ is a proper free factor system with Kurosh co-rank $\ge 3$; and $\psi \in \aut{G,\calB}$ is both \emph{relatively fully irreducible} and \emph{relatively atoroidal} (relative to $\calB$). 

Here is an equivalent definition of relatively fully irreducible:

\begin{lem}\label{lem;fullyirred}
An automorphim $\psi \in \aut{G, \calB}$ is relatively fully irreducible if and only if ~$\calB$ is a maximal $\psi$-periodic proper free factor system.
\end{lem}
\begin{proof}
If some $\psi$-periodic proper free factor system $(A_1, \ldots, A_k)$ is strictly higher than $\calB = (B_1, \ldots, B_\ell)$ in the preorder, then some $A_i$ is a $\psi$-periodic proper $(G, \calB)$-free factor that is not conjugate to any $B_j$.

Conversely, if some $\psi$-periodic proper $(G, \calB)$-free factor $A_1$ is not conjugate to any $B_i$, then the $\psi$-periodic free factor system $(A_1)$ can be extended to a $\psi$-periodic proper free factor system $(A_1, \ldots, A_k)$  that is strictly higher than $\calB$ by including some (conjugates of) $B_i$.
\end{proof}

For $h \in G$, ${\rm ad}_{h} \colon G \to G$ denotes the inner automorphism $g \mapsto hgh^{-1}$.
For a peripheral free factor $B_i$, let $k_i \ge 1$ be the smallest integer such that $\psi^{k_i}(B_i) = g_i^{-1}B_i{g_i}$ for some $g_i \in G$.
The \emph{peripheral suspension} $B_i \rtimes \mathbb Z$ is the suspension of $B_i$ by ${\rm ad}_{g_i}\circ \psi^{k_i}|_{B_i} \colon B_i \to B_i$;
this group naturally embeds in $G\rtimes_\psi \mathbb{Z}$ --- one can verify using normal forms that the natural homomorphism $B_i \rtimes \mathbb \langle s \rangle \to G \rtimes_\psi \mathbb \langle t \rangle$ that maps $s \mapsto g_i t^{k_i}$ is injective.

The first two named authors recently gave a \emph{relative cubulation} (introduced in \cite{EGRelCube}) of the mapping torus of a relatively fully irreducible relatively atoroidal automorphism.
Their proof is adapted from Hagen and Wise's cubulation of hyperbolic irreducible free-by-cyclic groups \cite{HW16}.

\begin{thm}[cf. {\cite[Thm. 1.1]{DK23}}]\label{thm;relative_cubulation}
    Under this section's assumptions, the mapping torus $ G \rtimes_\psi \bbZ$ acts cocompactly on a CAT(0) cube complex, where each cell stabiliser is either trivial or conjugate to a finite index subgroup of some peripheral suspension $B_i \rtimes \mathbb Z$. \qed
\end{thm}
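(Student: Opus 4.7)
The plan is to adapt Hagen and Wise's cubulation of hyperbolic irreducible free-by-cyclic groups \cite{HW16} to the present free-product-by-cyclic setting, leaving the peripheral free factors $B_i$ uncubulated. First, I would produce a relative train track representative $f\colon \mathcal{G}\to \mathcal{G}$ on a graph of groups whose vertex groups are conjugate to the $B_i$: relative full irreducibility forces the transition matrix on the non-peripheral edges to be Perron--Frobenius with eigenvalue $\lambda>1$, and relative atoroidality forbids closed Nielsen paths outside the peripheral vertex groups, so the non-peripheral dynamics behave as in the absolute fully irreducible atoroidal case. This is the data that would drive the wall construction.

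Next, I would build an equivariant wallspace on the universal cover $X$ of the mapping torus of $f$, realized as a quotient of $T \times \mathbb{R}$, where $T$ is the Bass--Serre tree of the free-product splitting determined by $\calB$. The walls come in two families: \emph{horizontal} walls from $G$-translates of level sets $T\times\{t\}$, and \emph{transverse} walls dual to the attracting lamination of $f$, assembled via an equivariant turn-cutting procedure on the non-peripheral edges of $\mathcal{G}$. The construction would be arranged so that every peripheral subtree of $T$ lies entirely in a single halfspace of each wall; hence no translate of a peripheral subgroup inverts a wall, and each peripheral suspension will stabilize a cell in the eventual cube complex. Sageev's dual cube complex construction then produces a CAT(0) cube complex on which $G\rtimes_\psi\mathbb{Z}$ acts. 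Cocompactness should follow from Sageev's finiteness criterion: exponential expansion of $f$, bounded cancellation, and relative atoroidality together limit wall-intersection patterns in compact pieces of $X$ to finitely many orbits, and cell stabilizers, which are stabilizers of ultrafilters on the walls, will either be trivial or sit at finite index in some peripheral suspension $B_i\rtimes\mathbb{Z}$.

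The hard part will be the equivariant design of the transverse walls near peripheral vertices. In the absolute setting of \cite{HW16}, walls are produced from the combinatorics of turns on a train track, but in the relative setting the $B_i$-action permutes edges incident to the corresponding vertex of $\mathcal{G}$ nontrivially, so each ``local wall piece'' must be assembled out of a full $B_i$-orbit of turns. Maintaining bounded cancellation and local finiteness of the wallspace while enforcing this equivariance is the technical heart of \cite[Thm.~1.1]{DK23} and is precisely what replaces the purely graph-theoretic turn analysis of the absolute case.
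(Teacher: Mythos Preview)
The paper does not prove this theorem at all: the \qed\ at the end of the statement signals that it is quoted as a black box from \cite{DK23}. The only proof-like content the paper supplies around this statement is Guirardel's claim that the ``no twinned subgroups'' hypothesis of the cited theorem is automatic under the standing assumptions (Kurosh co-rank $\ge 3$, relatively fully irreducible); that claim is proved via the limit tree of an expanding irreducible train track and Horbez's structural results. Your proposal, by contrast, is a sketch of how \cite{DK23} itself proceeds, and the paper explicitly confirms that the cited proof ``is adapted from Hagen and Wise's cubulation of hyperbolic irreducible free-by-cyclic groups \cite{HW16}'' --- which is exactly the strategy you outline.

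So your write-up is not wrong as a summary of the upstream argument, but it is mis-targeted for this paper: here there is nothing to prove beyond the twinned-subgroups claim, and your proposal does not address that claim at all. If the exercise was to supply the missing content \emph{in this paper}, you should instead argue that twinned subgroups cannot occur, using the $\psi$-equivariant expanding homothety on the limit $(G,\calB)$-tree and the dichotomy that nonperipheral point stabilisers force the tree to be geometric of surface type with cyclic such stabilisers.
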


The cited theorem has an additional assumption, absence of twinned subgroups: 
two subgroups $H_1 \neq H_2$ of $G$ are \emph{twinned} in $\calB$ if they are conjugates of some $B_j, B_k$ and ${\rm ad}_{g}\circ \psi^{n}(H_i) = H_i~(i=1,2)$ for some $n \ge 1$ and $g \in G$.
This assumption ensures the family of peripheral suspensions is malnormal (for relative hyperbolicity \cite[Thm.~0.1]{DL}), but
Guirardel remarked that it is redundant:

\begin{lem}[Guirardel]
    As $\calB$ has Kurosh co-rank $\ge 3$ and $\psi \in \aut{G,\calB}$ is relatively fully irreducible, there are no twinned subgroups in $\calB$.
\end{lem}
\noindent Our proof of the lemma uses objects (expanding train tracks, limit trees, geometric trees of surface type) that we do not define here for the sake of brevity; we refer the reader to the cited literature for each.

\begin{proof}    
The automorphism $\psi$ is represented by an expanding irreducible train track (see \cite[Sec.~1.3]{DL}).    Projectively iterating the train track produces the limit $(G,\calB)$-tree $T$ and a $\psi$-equivariant expanding homothety $h \colon T \to T$ (see \cite[p.~232]{BFH}). Note that nontrivial point stabilisers of $T$ are $\psi$-periodic (up to conjugacy) by the finiteness of $G$-orbits of branch points in $T$ \cite[Cor.~5.5]{Horbez_The_Boundary} and the $\psi$-equivariance of $h$.
    
    Let $H \le G$ be a nontrivial nonperipheral point stabiliser of $T$ --- nonperipheral means the subgroup is not conjugate to some~$B_i$.
    Then no proper $(G,\calB)$-free factor contains $H$ --- otherwise, the smallest such factor would be nonperipheral and $\psi$-periodic, yet $\psi \in \aut{G,\calB}$ is relatively fully irreducible.
    Thus $T$ is geometric of surface type \cite[Sec.~6.2, Lem.~6.8]{Horbez_The_Boundary} and the point stabiliser $H$ is cyclic \cite[Prop.~6.10]{Horbez_The_Boundary}.
    As $H$ was arbitrary, all nonperipheral point stabilisers of $T$ are cyclic;
    therefore, there are no twinned subgroups in $\calB$ because they would generate a noncyclic nonperipheral $T$-elliptic subgroup by the $\psi$-equivariance of $h$. 
\end{proof}

We will use the following theorem of Groves and Manning to upgrade relative cubulations in the next section.

\begin{thm}[cf. {\cite[Thm. D]{GM}}]\label{thm;groves_manning}
If a hyperbolic group $\Gamma$ acts cocompactly on a CAT(0) cube complex so that cell stabilisers are quasiconvex and cubulable, then $\Gamma$ is cubulable. \qed
\end{thm}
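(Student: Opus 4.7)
The plan is to invoke the Bergeron--Wise boundary criterion, which asserts that a hyperbolic group is cubulable as soon as every pair of distinct points in its Gromov boundary is separated by a quasiconvex codimension-1 subgroup. So the goal becomes producing enough such subgroups of $\Gamma$, and I would draw them from two sources coming from $X$ and from the cubulations of the cell stabilisers.

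First, I would harvest codimension-1 quasiconvex subgroups. The first family are the hyperplane stabilisers in $X$: because $\Gamma \curvearrowright X$ is cocompact and hyperplanes are convex subcomplexes, each hyperplane stabiliser $H$ is quasiconvex in $\Gamma$, and is codimension-1 in $\Gamma$ since the two halfspaces of the hyperplane descend to a partition of $\Gamma$ into two $H$-cobounded sides. The second family comes from the cubulations of the cell stabilisers themselves: each cell stabiliser $C$ is hyperbolic, being a quasiconvex subgroup of $\Gamma$, and by assumption it is cubulable, so Bergeron--Wise applied inside $C$ supplies an abundance of codimension-1 quasiconvex subgroups of $C$; transitivity of quasiconvexity in hyperbolic groups keeps them quasiconvex in $\Gamma$.

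Second, I would show these two families together separate every pair of boundary points. Given distinct $\xi,\eta \in \partial\Gamma$, choose a $\Gamma$-quasigeodesic $\gamma$ asymptotic to them and consider the image of $\gamma$ under an orbit map $\Gamma \to X$. If this image crosses some hyperplane with its two ends on opposite halfspaces, the stabiliser of that hyperplane separates $\xi$ from $\eta$ in $\partial\Gamma$. Otherwise, hyperbolicity of $\Gamma$ and cocompactness of $\Gamma \curvearrowright X$ force $\gamma$ to stay uniformly close to a single coset of some cell stabiliser $C$, so both $\xi$ and $\eta$ lie in the limit set of $C$ (embedded in $\partial\Gamma$ by quasiconvexity); the cubulation of $C$ then provides a quasiconvex codimension-1 subgroup of $C$, and hence of $\Gamma$, separating them.

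The main obstacle is precisely this last dichotomy: establishing that any quasigeodesic whose $X$-image is not cut by an $X$-hyperplane must track a single cell stabiliser coset. This requires a careful hyperbolicity-plus-cocompactness argument taming the possibly improper orbit map $\Gamma \to X$, combined with the fact that limit sets of quasiconvex subgroups embed properly in $\partial\Gamma$. Once this dichotomy is in place, the two families realise every boundary pair as separated by some quasiconvex codimension-1 subgroup, and Bergeron--Wise delivers a cubulation of $\Gamma$.
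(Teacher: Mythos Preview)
The paper does not prove this statement; it is quoted as a black box from Groves--Manning, the only addendum being that ``virtually special'' in the original can be replaced by ``cubulable'' for hyperbolic groups via Bergeron--Wise and Agol. So your attempt at a direct argument already goes beyond what the paper undertakes.

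Your outline is in the right spirit, but there is a genuine gap. You write that the cubulation of a cell stabiliser $C$ ``provides a quasiconvex codimension-1 subgroup of $C$, and hence of $\Gamma$.'' That ``hence'' is unjustified: a subgroup $K\le C$ with $K\backslash C$ having more than one end need not have $K\backslash\Gamma$ with more than one end, so $K$ need not be codimension-1 in $\Gamma$. Equivalently, a wall in $C$ separating two points of $\partial C$ does not automatically extend to a wall in $\Gamma$ separating those same points in $\partial\Gamma$. This is exactly where the hard work in Groves--Manning lies: one must manufacture genuine codimension-1 subgroups of $\Gamma$ by combining hyperplanes of $X$ with walls drawn from the cubulations of the stabilisers they meet, and the virtual specialness hypothesis (equivalent to cubulability here, but used in its ``special'' form) is what allows these pieces to be glued coherently. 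Your dichotomy in the second step is also too coarse: even once you know both endpoints lie in $\Lambda C\subset\partial\Gamma$, you still need separation in $\partial\Gamma$, not merely in $\partial C$, and that is the same issue again.
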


\noindent The cited theorem has ``virtually special'' in place of ``cubulable''.
Since virtually cubulable hyperbolic groups are cubulable \cite[Lem.~7.14]{Wise_book}, the properties ``virtually special'' and ``cubulable'' are equivalent for hyperbolic groups by Agol's theorem \cite{Agol}.
In particular, for hyperbolic groups, being cubulable is a commensurability invariant.

Finally, for sporadic cases when the Kurosh co-rank is 2, we will need a specialisation of Hsu and Wise's malnormal combination theorem:

\begin{thm}[cf. {\cite[Cor. C]{HW}}]\label{thm;hsu_wise}
Suppose $\Gamma = \Gamma_1 *_{\langle c \rangle} \Gamma_2$ or $\Gamma_1 *_{\langle c \rangle}$ is hyperbolic and $\langle c \rangle$ is an infinite cyclic malnormal subgroup of $\Gamma$. If each $\Gamma_i$ is cubulable, then $\Gamma$ is cubulable. \qed
\end{thm}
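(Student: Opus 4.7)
The plan is to reduce the statement to the Hsu--Wise malnormal combination theorem in its original form \cite{HW}, whose conclusion is that an amalgamated product or HNN extension of virtually (compact) special hyperbolic groups along a quasiconvex malnormal edge subgroup is itself virtually (compact) special. The hypotheses to check are then: (i) $\Gamma$ is hyperbolic; (ii) $\langle c \rangle$ is malnormal in $\Gamma$; (iii) $\langle c \rangle$ is quasiconvex in $\Gamma$; and (iv) each vertex group $\Gamma_i$ is virtually compact special.

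Items (i) and (ii) are part of the assumptions. For (iii), I would invoke the standard fact that any infinite cyclic subgroup of a hyperbolic group is quasiconvex, which applies both to $\langle c \rangle \le \Gamma$ and to $\langle c \rangle \le \Gamma_i$ after noting that each $\Gamma_i$ is itself quasiconvex (and thus hyperbolic) in $\Gamma$ by the usual analysis of splittings of hyperbolic groups over quasiconvex malnormal subgroups. For (iv), each $\Gamma_i$ is cubulable and hyperbolic, so the Bergeron--Wise boundary criterion \cite{BW} combined with Agol's theorem \cite{Agol} promotes it to virtual compact specialness; this is precisely the equivalence already invoked after \cref{thm;groves_manning}.

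Having verified the hypotheses, I would apply \cite[Cor.~C]{HW} to deduce that $\Gamma$ is virtually compact special. In particular, a finite-index subgroup of $\Gamma$ acts properly and cocompactly on a CAT(0) cube complex, so $\Gamma$ is virtually cubulable; since cubulability is a commensurability invariant for hyperbolic groups --- again by the boundary criterion and Agol's theorem --- $\Gamma$ itself is cubulable, as required.

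The main obstacle is not geometric but linguistic: the statement is phrased in terms of cubulability, whereas the deep input of \cite{HW} is phrased in terms of virtual compact specialness, and one must cleanly transit between these two notions in both directions. Both translations rest on the same Bergeron--Wise--Agol circle of ideas, so once that equivalence is made explicit the theorem follows as a direct application of \cite[Cor.~C]{HW}, with no new geometric construction required beyond those already present in \cite{HW}.
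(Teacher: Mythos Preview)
Your proposal is correct and matches the paper's approach: the paper does not give a separate proof of \cref{thm;hsu_wise} but simply cites \cite[Cor.~C]{HW}, relying on the equivalence between cubulability and virtual compact specialness for hyperbolic groups already explained in the remark after \cref{thm;groves_manning}. Your write-up makes explicit the verification of the hypotheses (quasiconvexity of $\langle c\rangle$, hyperbolicity of the $\Gamma_i$) that the paper leaves implicit in the ``cf.'' citation, but the route is the same.
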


\noindent The two decompositions can be stated together as: ``$\Gamma$ splits over $\langle c \rangle$.''

\section{The bootstrap}

The following proposition is due to Sela (see \cref{prop;DSG_vcsg} for a proof).

\begin{prop}[cf. {\cite[Cor. 1.10]{SelaStructure}}]\label{prop;free_prod_surfaces}
Assume $G$ is a torsion-free hyperbolic group and some extension $G\rtimes_\phi  \mathbb{Z}$ does not contain a copy of $\mathbb{Z}^2$. 
If $G$ is freely indecomposable, then it is the fundamental group of a closed surface. \qed 
\end{prop}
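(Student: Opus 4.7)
The plan is to analyse the canonical cyclic JSJ decomposition of $G$ (due to Sela and Bowditch), which exists because $G$ is one-ended, being torsion-free hyperbolic and freely indecomposable. Its edge stabilisers are maximal 2-ended subgroups, and its vertex groups fall into three types: 2-ended, rigid, or maximal hanging Fuchsian (surface-with-boundary). By canonicality, the outer class of $\phi$ acts on the associated Bass-Serre tree; after replacing $\phi$ by a positive power $\phi^N$, we may assume $\phi^N$ fixes every vertex and every edge of the underlying graph, and hence preserves every associated stabiliser up to $G$-conjugacy.

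First I would rule out the existence of edges. Let $\bk{c}$ be an edge group; then $\phi^N(c) = g c^{\epsilon} g^{-1}$ for some $g \in G$ and $\epsilon \in \{\pm 1\}$, and after doubling $N$ we may take $\epsilon = 1$. Writing $t$ for the stable letter of the mapping torus $G\rtimes_\phi \bbZ$, the relation $t^N c t^{-N} = \phi^N(c) = gcg^{-1}$ implies that $g^{-1}t^N$ commutes with $c$. Now $g^{-1}t^N$ projects to a nonzero element of $\bbZ$, whereas $c$ lies in the kernel of $G\rtimes_\phi \bbZ \onto \bbZ$, so these two commuting elements generate a $\bbZ^2$, contradicting the hypothesis. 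Hence the JSJ has no edges and collapses to a single vertex carrying all of $G$.

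It remains to rule out the 2-ended and rigid possibilities for this lone vertex. If $G$ is 2-ended then $G \cong \bbZ$ and $G\rtimes_\phi \bbZ$ is either $\bbZ^2$ or the Klein-bottle group, each containing $\bbZ^2$. If $G$ is rigid then Sela's rigidity theorem gives that $\mathrm{Out}(G)$ is finite, so some power $\phi^N$ is inner, say $\phi^N = \mathrm{ad}_h$; then $h^{-1}t^N$ centralises all of $G$ inside $G\rtimes_\phi \bbZ$, and together with any nontrivial element of $G$ it spans a $\bbZ^2$. The only surviving option is maximal hanging Fuchsian, and since there are no edges the corresponding surface has empty boundary, so $G$ is the fundamental group of a closed surface (of negative Euler characteristic, by hyperbolicity). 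The main obstacle is the rigidity-implies-finite-$\mathrm{Out}$ input, which packages the substantive JSJ-rigidity work of Rips-Sela and Paulin for hyperbolic groups; the remainder is bookkeeping on the mapping-torus identity $t^N g t^{-N} = \phi^N(g)$, which manufactures $\bbZ^2$ out of any element that is preserved, up to conjugacy, by a positive power of $\phi$.
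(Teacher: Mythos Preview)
Your argument is correct and follows essentially the same route as the paper, which (via the proof of the more general \cref{prop;DSG_vcsg}) also invokes Bowditch's canonical JSJ, uses its $\phi$-invariance, and appeals to Paulin's theorem to rule out nontrivial edges and rigid vertices from the no-$\mathbb{Z}^2$ hypothesis. One small organisational point: your opening claim that $G$ is one-ended fails when $G \cong \mathbb{Z}$ (which is freely indecomposable in the paper's sense), so the 2-ended case you dispatch later should come \emph{before} invoking the JSJ; once $G \neq \mathbb{Z}$, Stallings' theorem gives one-endedness and your analysis goes through.
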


We may now prove the central result of this note:

\begin{thm}\label{prop;tf_cubulable}
 Let $G$ be a torsion-free hyperbolic group.
 If $G\rtimes_\phi \mathbb{Z}$ is hyperbolic, then it is cubulable.
\end{thm}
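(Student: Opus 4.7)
The plan is to induct on the Kurosh--Grushko rank $n$ of $G$, freely replacing $\phi$ by positive powers as convenient---legitimate because $G \rtimes_{\phi^k} \mathbb{Z}$ is finite-index in $G \rtimes_\phi \mathbb{Z}$ and cubulability is a commensurability invariant among hyperbolic groups. For the base case $n = 1$, $G$ is freely indecomposable and cannot equal $\mathbb{Z}$ (as $\mathbb{Z} \rtimes \mathbb{Z}$ is never hyperbolic), so \cref{prop;free_prod_surfaces} identifies $G$ as a closed surface group. Thurston's hyperbolisation then realises $G \rtimes_\phi \mathbb{Z}$ as a closed hyperbolic $3$-manifold group, which is cubulable by Bergeron--Wise (or Dufour).

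For the inductive step $n \ge 2$, \cref{lem;maximal_weak_inv} furnishes a maximal weakly invariant proper free factor system $\calB = (B_1, \dots, B_\ell)$, each $B_i$ of Kurosh--Grushko rank $< n$; after replacing $\phi$ by a power $\psi$, one may assume $\psi$ preserves $\calB$. No $B_i$ can be $\mathbb{Z}$, for otherwise the peripheral suspension would embed into $G \rtimes_\psi \mathbb{Z}$ as $\mathbb{Z}^2$ or a Klein bottle, contradicting hyperbolicity. Next, $\psi$ is atoroidal on $G$: any relation $\psi^m(g) = h g h^{-1}$ with $g \neq 1$ would force $h^{-1} t^m$ to commute with $g$, generating a $\mathbb{Z}^2$ in $G \rtimes_\psi \mathbb{Z}$. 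Consequently the induced peripheral automorphism on each $B_i$ is also atoroidal, so Bestvina--Feighn makes each peripheral suspension $B_i \rtimes \mathbb{Z}$ hyperbolic, and the inductive hypothesis makes it cubulable. Maximality of $\calB$ moreover gives $\psi$ relatively fully irreducible with respect to $\calB$.

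If the Kurosh co-rank of $\calB$ is at least $3$, \cref{thm;relative_cubulation} delivers a cocompact action of $G \rtimes_\psi \mathbb{Z}$ on a CAT(0) cube complex whose cell stabilisers are trivial or finite-index subgroups of peripheral suspensions; these stabilisers are cubulable by induction and quasiconvex in the hyperbolic ambient group, so \cref{thm;groves_manning} (Groves--Manning) upgrades this to genuine cubulability. Otherwise the Kurosh co-rank of $\calB$ equals $2$, and $\calB$ corresponds either to $G = B_1 * B_2$ with both $B_i$ non-cyclic, or to $G = B_1 * \mathbb{Z}$ with $B_1$ non-cyclic (the remaining possibility $G = F_2$ is vacuous: any $\phi \in \aut{F_2}$ is either fully irreducible---giving a cusped hyperbolic $3$-manifold group with peripheral $\mathbb{Z}^2$---or has a periodic proper free factor, which by the atoroidality argument again forces a $\mathbb{Z}^2$). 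The $\psi$-equivariant action on the Bass--Serre tree dual to $\calB$, after a further power of $\psi$ to stabilise edges, yields a splitting of $G \rtimes_\psi \mathbb{Z}$ over $\langle t \rangle$, either as the amalgamated product $(B_1 \rtimes \mathbb{Z}) *_{\langle t \rangle} (B_2 \rtimes \mathbb{Z})$ or as an HNN extension of $B_1 \rtimes \mathbb{Z}$. Malnormality of $\langle t \rangle$ in the torsion-free hyperbolic $G \rtimes_\psi \mathbb{Z}$ follows from atoroidality (forcing $C(t) = \langle t \rangle$) together with the impossibility of an inversion $x t x^{-1} = t^{-1}$ with $x \notin \langle t \rangle$ (which would produce a Klein bottle, hence $\mathbb{Z}^2$). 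Since the vertex groups are cubulable by induction, \cref{thm;hsu_wise} (Hsu--Wise) closes the co-rank-$2$ case, and commensurability returns cubulability to $G \rtimes_\phi \mathbb{Z}$.

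The hard part will be the co-rank-$2$ case, particularly its HNN sub-case: the $\mathbb{Z}$-free factor of $G = B_1 * \mathbb{Z}$ is not $\psi$-invariant in general, so the splitting of $G \rtimes_\psi \mathbb{Z}$ must be extracted from the $\psi$-action on the Bass--Serre tree of $(B_1)$ rather than from an apparent semidirect structure, and one must track further powers of $\psi$ to guarantee the edge stabiliser equals $\langle t \rangle$ (and not a proper subgroup $\langle t^m \rangle$).
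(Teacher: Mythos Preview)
Your overall strategy matches the paper's exactly: induct on Kurosh--Grushko rank, invoke \cref{thm;relative_cubulation} plus Groves--Manning when the Kurosh co-rank of $\calB$ is $\ge 3$, and Hsu--Wise when it is $2$. There are, however, two soft spots. First, you derive hyperbolicity of each peripheral suspension $B_i \rtimes \mathbb Z$ from atoroidality of $\psi|_{B_i}$ via ``Bestvina--Feighn''; but the $B_i$ are not free groups in general, so this is not simply Brinkmann's theorem, and in any case this route does not supply the \emph{quasiconvexity} of $B_i \rtimes \mathbb Z$ inside $G \rtimes_\psi \mathbb Z$ that \cref{thm;groves_manning} requires --- you assert quasiconvexity without argument. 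The paper kills both birds with one stone: since each $B_i$ is a (quasiconvex) free factor of $G$, one obtains Lipschitz retractions $G \rtimes_\psi \mathbb Z \to B_i \rtimes \mathbb Z$, so the peripheral suspensions are quasiconvex (hence hyperbolic) in the hyperbolic ambient group, and the induction hypothesis then applies to them.

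Second, your stated concern about the HNN sub-case in co-rank $2$ dissolves under an explicit presentation manipulation rather than by passing to further powers of $\psi$. After arranging $\psi(B_1) = B_1$ and (up to a square and a conjugation) $\psi(s) = bs$ for some $b \in B_1$, the mapping-torus relation $c^{-1}sc = bs$ rewrites as $s^{-1}(cb)s = c$, exhibiting $G \rtimes_\psi \langle c\rangle$ directly as the HNN extension $(B_1 \rtimes \langle c\rangle) *_{\langle cb\rangle^s = \langle c\rangle}$ with edge group visibly $\langle c\rangle$ --- maximal cyclic in a torsion-free hyperbolic group, hence malnormal. No chase of edge stabilisers $\langle t^m\rangle$ is needed.
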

\begin{proof}
    We proceed by induction on the Kurosh--Grushko rank.

    If the Kurosh--Grushko rank of $G$ is $1$, then $G$ is freely indecomposable.
    By \cref{prop;free_prod_surfaces}, $G$ is a closed surface group and, by the classification of its automorphisms, 
    $\phi$ is pseudo-Anosov \cite[Thm.~5.5]{Thurston3dmflds}. 
    Then $G\rtimes_\phi \mathbb{Z}$ is famously the fundamental group of a closed hyperbolic 3-manifold \cite[Thm.~5.6]{Thurston3dmflds}
     and cubulable, as already mentioned in \cref{sec;intro}. 
     Assume $n \ge 2$ and the theorem holds for torsion-free hyperbolic groups of Kurosh--Grushko  rank $< n$. 
    
Let the Kurosh--Grushko rank of $G$ be $n$.
\cref{lem;maximal_weak_inv} provides a maximal $\phi$-periodic proper free factor system $\calB=(B_1, \dots, B_\ell)$, and each $B_i$ has Kurosh--Grushko rank $< n$.
As each peripheral free factor $B_i$ is quasiconvex in the hyperbolic group $G$, a closest point projection $G \to B_i$ is Lipschitz and extends (cosetwise) to a \emph{peripheral retraction} $G \rtimes_\phi \mathbb Z \to B_i \rtimes \mathbb Z$ to the peripheral suspension.
Since $\phi$ is a quasi-isometry, the peripheral retractions are Lipschitz by the Morse lemma (in~$G$) --- a variation of this idea appears in \cite[Sec.~3]{MahanCTmaps}.
Thus the peripheral suspensions are quasiconvex and hyperbolic.
%one can show\footnote{some work needed to argue qc} that $G \rtimes_\phi \mathbb Z$ admits Lipschitz retractions to its peripheral suspensions $B_i \rtimes \mathbb Z$; hence the latter are quasiconvex and hyperbolic. 
By the induction hypothesis, each $B_i \rtimes \mathbb Z$ is cubulable.

We distinguish two cases. 
The first case is when the Kurosh co-rank of $\calB$ is at least $3$.
Some positive power $\psi$ of~$\phi$ preserves $\calB$ and, by \Cref{lem;fullyirred}, $\psi \in \aut{G,\calB}$ is relatively fully irreducible.
    Since $G \rtimes_\psi \mathbb Z$ is hyperbolic, it has no $\mathbb Z^2$-subgroups and there are no $\psi$-periodic conjugacy classes of nontrivial elements in~$G$.
    In particular, $\psi \in \aut{G,\calB}$ is relatively atoroidal. 
    By \cref{thm;relative_cubulation}, $ G \rtimes_\psi \bbZ$ 
    acts cocompactly on a CAT(0) cube complex, where each cell stabiliser is either trivial or conjugate to a finite index subgroup 
    of some quasiconvex cubulable $B_i \rtimes \mathbb Z$.
    Groves and Manning's \cref{thm;groves_manning} thus implies $G\rtimes_\psi \mathbb{Z}$ is cubulable. 
    It naturally embeds in $G\rtimes_\phi \mathbb{Z}$ with finite index, so the latter is also cubulable by \cite[Lem.~7.14]{Wise_book}. 

    The last case is when the Kurosh co-rank of $\calB$ is $2$.  There are three possibilities: $G$ is $B_1*B_2$, $B_1*F_1$, or $F_2$. We rule out the third possibility as $F_2 \rtimes \mathbb Z$ is never hyperbolic --- it is a classical theorem of Nielsen that any automorphism of $F_2$ maps the commutator of a basis to a conjugate of itself or its inverse \cite{Nielsen}. 
    To conclude, we will prove that $\Gamma = G \rtimes_\phi \langle t \rangle$ (virtually) satisfies the hypotheses of Hsu and Wise's \cref{thm;hsu_wise}, and hence is cubulable.
    Note that $\langle t \rangle$ is a maximal cyclic subgroup of $\Gamma$, and hence malnormal. It remains to show that $\Gamma$ splits over $\langle t \rangle$ as needed.

    In the first possibility, up to taking the square of $\phi$, we may assume that $\phi$ preserves the conjugacy classes of both $B_1$ and $B_2$. After conjugation (which does not change the mapping torus), we may assume it fixes $B_1$ (setwise) and, being an automorphism, it sends $B_2$ to a conjugate by an element of $B_1$. After further conjugation, it fixes both $B_1$ and $B_2$. Then the mapping torus $\Gamma = (B_1 * B_2) \rtimes_\phi \langle t \rangle \cong (B_1\rtimes \langle t \rangle) *_{\langle t \rangle} (B_2 \rtimes \langle t \rangle) $. 
    
    In the second possibility, we write $G=B_1*\langle s\rangle$. Up to taking the square of $\phi$ and composing with a conjugation, we may assume that $\phi(B_1) = B_1$ and $\phi(s) = sb$ for some $b\in B_1$.  
    Consider  $G\rtimes_\phi \langle t \rangle$, where one has the relation $t s t^{-1} = sb $, or written differently $s^{-1}ts=bt$. 
    Then, rewriting the presentation, one has that 
    \[\Gamma = (B_1*\langle s \rangle )\rtimes_\phi \langle t \rangle \cong (B_1\rtimes \langle t \rangle) *_{\langle t \rangle^{s} =\langle bt \rangle,}\] where the last operation is an HNN extension with a stable letter $s$ that (right) conjugates $\langle t \rangle $ to $\langle bt \rangle$ (and actually $t$ to $bt$).
    \end{proof}

\section{Once more, with torsion}
Now  $G$ is a finitely presented group (possibly with torsion). It has a maximal decomposition as the fundamental group of a finite graph of groups with finite edge groups \cite{Dunwoody}. The infinite vertex groups are thus one-ended \cite{Stallings}. We call this a \emph{Dunwoody--Stallings decomposition}. 
It is not unique, but the conjugacy classes of infinite vertex groups are uniquely defined: they are conjugacy classes of the maximal one-ended subgroups of $G$. 
 The following is a generalisation of \cref{prop;free_prod_surfaces}:

\begin{prop}\label{prop;DSG_vcsg}
  Assume $G$ is a hyperbolic group (possibly with torsion) and some extension $G \rtimes_\phi \mathbb Z$ does not contain a copy of $\mathbb Z^2$.
  Then every maximal one-ended subgroup of $G$ is virtually a closed surface group.
\end{prop}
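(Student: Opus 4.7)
The idea is to handle one maximal one-ended subgroup $H\le G$ at a time: produce an automorphism $\psi$ of $H$ whose mapping torus is still $\bbZ^2$-free, then use the canonical JSJ decomposition of $H$ over virtually cyclic subgroups to rule out every configuration except that of a single closed hanging Fuchsian vertex.

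First I would carry out the reduction. The Dunwoody--Stallings decomposition of $G$ has only finitely many conjugacy classes of maximal one-ended subgroups, and $\phi$ permutes this finite set, so some power $\phi^n$ preserves the conjugacy class of $H$; after composing with an inner automorphism of $G$ we may assume $\phi^n(H)=H$. Set $\psi=\phi^n|_H$. The mapping torus $H\rtimes_\psi\bbZ$ embeds in the index-$n$ subgroup $G\rtimes_{\phi^n}\bbZ$ of $G\rtimes_\phi\bbZ$, hence is itself $\bbZ^2$-free. Since Dunwoody--Stallings edge groups are finite, $H$ is quasiconvex, and so is a one-ended hyperbolic group in its own right. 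As a preliminary observation, $[\psi]\in\mathrm{Out}(H)$ has infinite order: otherwise $\psi^m=\mathrm{ad}_h$ for some $h\in H$, and then $h^{-1}t^m$ centralises $H$ inside $H\rtimes_\psi\bbZ$, producing a $\bbZ^2$ together with any infinite-order element of $H$.

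Next I would invoke Bowditch's canonical JSJ decomposition of $H$ over its virtually cyclic subgroups. Canonicality forces $\psi$ to preserve the JSJ tree, so after replacing $\psi$ by a further power we may assume every vertex and edge stabiliser is $\psi$-invariant up to conjugation. The non-elementary vertex groups are either rigid (admitting no splitting over virtually cyclic subgroups) or maximal hanging Fuchsian. A rigid vertex group $V$ has finite $\mathrm{Out}(V)$ by Paulin's theorem, so some power of $\psi|_V$ is inner, and the centraliser trick of the previous paragraph applied inside $V\rtimes\bbZ\le H\rtimes_\psi\bbZ$ produces a $\bbZ^2$. A maximal hanging Fuchsian vertex $F$ with at least one incident edge has an infinite-order peripheral element $c$ whose conjugacy class is $\psi$-periodic; writing $\psi^k(c)=hch^{-1}$, the element $h^{-1}t^k$ commutes with $c$, yielding again a $\bbZ^2$. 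Both cases contradict the hypothesis, so the JSJ reduces to a single closed maximal hanging Fuchsian vertex, and $H$ is virtually the fundamental group of a closed surface, as claimed.

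The hardest ingredient to pin down will be the with-torsion JSJ machinery: one needs the canonicality of Bowditch's JSJ under all of $\mathrm{Aut}(H)$, together with a Paulin-type finiteness of $\mathrm{Out}$ for rigid vertex groups that may themselves contain torsion. Both ingredients exist in the literature (via Bowditch's original work and its subsequent extensions by Guirardel--Levitt), but their combination here needs careful referencing; the passage from $G$ to $H$ via Dunwoody--Stallings, together with the centraliser trick converting inner or periodic phenomena into $\bbZ^2$ subgroups, is the main new ingredient beyond Sela's torsion-free argument.
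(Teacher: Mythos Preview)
Your argument is correct and matches the paper's approach: pass to a $\psi$-invariant maximal one-ended subgroup $H$, invoke Bowditch's canonical JSJ together with Paulin's theorem, and exploit the $\mathbb{Z}^2$-free hypothesis via the centraliser trick you spell out. The paper organises the endgame slightly differently---it first observes that any JSJ edge group is two-ended and $\psi$-periodic (hence already yields a $\mathbb{Z}^2$), so the JSJ collapses to a single vertex and Paulin is then applied to $H$ itself rather than to a proper rigid vertex group, which sidesteps the delicate referencing issue you flag in your final paragraph.
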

\begin{proof}
Let $H$ be a  maximal one-ended subgroup of $G$. Since there are only finitely many conjugacy classes of such subgroups, $\psi = ({\rm ad}_g \circ \phi^k)|_{H}$ is an automorphism of~$H$ for some integer $k\geq 1$ and element $g\in G$.
 
 Similar to the discussion in \cref{sec;ingredients}, the suspension  $H\rtimes_{\psi} \mathbb{Z}$ naturally embeds in $G \rtimes_\phi\mathbb{Z}$. As $H$ is one-ended, its JSJ decomposition is preserved by $\psi$ \cite[Thm.~0.1]{Bowditch}. The lack of $\mathbb{Z}^2$ in $G \rtimes_\phi\mathbb{Z}$ imposes that the JSJ is trivial but not a rigid vertex \cite[Cor.~1.3]{BestvinaFeighn}. It is therefore a vertex of surface type. In particular, $H$ is virtually a closed surface group (see, for instance, \cite[Sec.~4]{Martino}).
\end{proof}

We are now ready to state the main observation of this section.
\begin{prop}\label{prop;virtFP}
  If $G$ is a hyperbolic group (possibly with torsion) and 
  some extension $G \rtimes_\phi \mathbb{Z}$ does not contain a copy of $\mathbb{Z}^2$, then $G$
  has a characteristic finite index subgroup that is a free product of closed surface groups and free groups. In particular, $G$ is residually finite.
  \end{prop}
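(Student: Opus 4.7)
I plan to construct a torsion-free finite-index normal subgroup $N \trianglelefteq G$ by a two-step quotient, analyse its action on the Bass--Serre tree of a Dunwoody--Stallings decomposition, and then upgrade $N$ to a characteristic subgroup via intersections with its $\Aut{G}$-orbit. To begin, fix a Dunwoody--Stallings decomposition $\calG$ of $G$: a finite graph of groups with finite edge groups and one-ended infinite vertex groups. By \cref{prop;DSG_vcsg}, every infinite vertex group $V$ of $\calG$ is virtually a closed surface group, so $V$ admits a characteristic torsion-free finite-index subgroup $K_V \trianglelefteq V$ (for instance, intersect all subgroups of index at most the index of a fixed finite-index closed surface subgroup); set $K_V = \{1\}$ for finite $V$.

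Since each $K_V$ is torsion-free and each edge group $E_e$ of $\calG$ is finite, the boundary monomorphisms $E_e \hookrightarrow V$ descend to injections $E_e \hookrightarrow V/K_V$. These data define a quotient graph of groups $\bar{\calG}$ --- same underlying graph and edge groups, but with finite vertex groups $V/K_V$ --- together with a natural surjection $G = \pi_1(\calG) \twoheadrightarrow \pi_1(\bar{\calG})$. The target is the fundamental group of a finite graph of finite groups, hence virtually free (Karrass--Pietrowski--Solitar), and in particular residually finite. I would then pick a finite quotient $Q$ of $\pi_1(\bar{\calG})$ that is injective on each vertex group $V/K_V$ and define $N$ as the kernel of $G \twoheadrightarrow Q$. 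By construction $N$ is normal and of finite index in $G$; since any torsion element of $G$ is conjugate into some $V$, normality of $N$ together with injectivity of $V/K_V \hookrightarrow Q$ forces such an element into $K_V$ after conjugation, and torsion-freeness of $K_V$ yields that $N$ itself is torsion-free. The induced action on the Bass--Serre tree of $\calG$ then has trivial edge stabilisers (finite edge groups, torsion-free $N$) and vertex stabilisers that are either trivial or torsion-free finite-index subgroups of virtual closed surface groups --- hence closed surface groups themselves. Bass--Serre theory exhibits $N$ as a free product of closed surface groups and a free group.

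To conclude, set $N_0 = \bigcap_{\varphi \in \Aut{G}} \varphi(N)$. Since $G$ is finitely generated, it has only finitely many subgroups of index $[G:N]$, so the intersection is finite and $N_0$ is characteristic and of finite index in $G$. The Kurosh subgroup theorem, combined with the fact that subgroups of closed surface groups are themselves closed surface groups or free, implies that $N_0 \leq N$ inherits the free product structure. Residual finiteness of $G$ is then immediate, as $N_0$ is residually finite (being a free product of residually finite groups) and has finite index in $G$. The main technical hurdle I anticipate is the torsion-freeness of $N$: we cannot invoke residual finiteness of $G$ directly (that is precisely what we are trying to establish), so we must route through the virtually free quotient $\pi_1(\bar{\calG})$, for which residual finiteness is classical, and then rely on normality of $N$ to control conjugates of torsion elements.
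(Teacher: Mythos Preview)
Your proposal is correct and follows essentially the same route as the paper: quotient the vertex groups of a Dunwoody--Stallings decomposition by their torsion-free surface subgroups to obtain a graph of finite groups, use Karrass--Pietrowski--Solitar to pull back a torsion-free finite-index subgroup, make it characteristic by intersecting over a finite family, and read off the free-product structure from the induced action on the Bass--Serre tree. The only cosmetic difference is that the paper pulls back a free finite-index subgroup of $\pi_1(\bar{\calG})$ directly (noting the kernel of $q$ is already torsion-free), whereas you pass through a further finite quotient $Q$ injective on the vertex quotients --- both yield the same conclusion.
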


\begin{proof} 
Let $\bbX$ be a Dunwoody--Stallings decomposition of $G$. We need notations for the decomposition: the underlying finite graph is $X$; for each vertex $v$ in $X$, its vertex group is $\mathbb{X}_v$; and for each edge $e$ in $X$, its finite edge group is $\mathbb{X}_e$. 
%Recall that $\bbX$ has underlying finite graph $X$ with vertex groups $\bbX_v~(v \in X^{(0)})$ and finite edge groups $\bbX_e~(e \in X^{(1)})$. 
For each vertex $v$, denote by $H_v$ a normal finite index subgroup of $\bbX_v$ that is either trivial or a closed surface group, as guaranteed by \cref{prop;DSG_vcsg}.

As the subgroups $H_v$ are torsion-free, the surjections $q_v \colon \bbX_v \to \bbX_v/H_v$ are injective on finite subgroups.
Thus we define a graph of finite groups $\bbY$ with underlying graph $X$, vertex groups $\bbX_v/H_v$, and edge groups $\bbX_e$; the surjections $q_v$ induce a surjection $q \colon G \to \pi_1(\bbY)$ with a torsion-free kernel. The quotient $\pi_1(\bbY)$ is virtually free by Karrass, Pietrowski, and Solitar's characterisation \cite[Thm.~1]{KPS}. 

Let $J \le \pi_1(\bbY)$ be a free finite index subgroup.
Since $J$ and the kernel of~$q$ are torsion-free, the preimage $q^{-1}(J) \le G$ is a torsion-free finite index subgroup.
The intersection $H$ of subgroups of $G$ with index $[G : q^{-1}(J)]$ is a characteristic torsion-free finite index subgroup.
The decomposition~$\bbX$ of $G$ induces a Grushko decomposition of $H$ whose freely indecomposable free factors are closed surface groups.
\end{proof}

We may extend Brinkmann's thesis \cite{Brinkmann} to the case with torsion.
\begin{cor}\label{cor;alg_hyp}
  Suppose $G$ is a hyperbolic group. Then $G\rtimes_\phi \mathbb{Z}$ is hyperbolic if and only if it does not contain a copy of $\mathbb{Z}^2$. 
\end{cor}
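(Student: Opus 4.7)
The forward direction is immediate: hyperbolic groups never contain $\mathbb{Z}^2$, so if $G \rtimes_\phi \mathbb{Z}$ is hyperbolic then it contains no $\mathbb{Z}^2$. The substance of the statement is the converse, and the plan is to bootstrap from Brinkmann's torsion-free result using \cref{prop;virtFP} to eliminate torsion in a $\phi$-invariant way.

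Assume $G \rtimes_\phi \mathbb{Z}$ contains no copy of $\mathbb{Z}^2$. The first step is to invoke \cref{prop;virtFP} to produce a characteristic finite index subgroup $H \le G$ which is torsion-free and a free product of closed surface groups and free groups. Being characteristic, $H$ is preserved by $\phi$, so $\phi$ restricts to an automorphism $\phi|_H \in \aut{H}$. The mapping torus $H \rtimes_{\phi|_H} \mathbb{Z}$ then embeds as a finite index subgroup of $G \rtimes_\phi \mathbb{Z}$; in particular, it also contains no copy of $\mathbb{Z}^2$.

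The second step is to apply Brinkmann's theorem \cite{Brinkmann} (in the torsion-free generality stated in the introduction) to the pair $(H, \phi|_H)$. Since $H$ is a torsion-free hyperbolic group (it is a finitely generated subgroup of $G$, and is a free product of closed surface and free groups) and $H \rtimes_{\phi|_H} \mathbb{Z}$ has no $\mathbb{Z}^2$, we conclude $H \rtimes_{\phi|_H} \mathbb{Z}$ is hyperbolic.

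For the final step, hyperbolicity is a commensurability invariant for finitely generated groups, so $G \rtimes_\phi \mathbb{Z}$, being a finite extension of the hyperbolic group $H \rtimes_{\phi|_H}\mathbb{Z}$, is itself hyperbolic. The key obstacle---producing a torsion-free finite index subgroup that is \emph{characteristic} (hence $\phi$-invariant) and of a shape to which Brinkmann's theorem applies---is exactly the content of \cref{prop;virtFP}, so the proof reduces to assembling these ingredients.
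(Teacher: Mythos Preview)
Your proof is correct and follows essentially the same approach as the paper: invoke \cref{prop;virtFP} to pass to a characteristic torsion-free finite index subgroup, apply Brinkmann's result to the restricted mapping torus, and lift hyperbolicity back up via commensurability invariance. The paper's argument is identical in structure, only more terse.
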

The forward implication is standard. Conversely, if $G \rtimes_\phi \mathbb Z$ does not contain a copy of $\mathbb{Z}^2$, then the same holds for the finite index subgroup $G_0\rtimes_{\phi|_{G_0}} \mathbb{Z}$, where $G_0$ is the torsion-free subgroup given by \cref{prop;virtFP}. As  $G_0 \rtimes_{\phi|_{G_0}} \mathbb{Z}$ is hyperbolic \cite{Brinkmann}, so is $G\rtimes_\phi \mathbb{Z}$.

\begin{cor}\label{cor;main_result}
  If $G$ and $G \rtimes_\phi \mathbb{Z}$ are hyperbolic groups, then $G \rtimes_\phi \mathbb{Z}$ is cubulable.
\end{cor}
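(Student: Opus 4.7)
The plan is to bootstrap \cref{prop;tf_cubulable} (the torsion-free case) up to the case with torsion using the virtual structure provided by \cref{prop;virtFP}, and then transfer cubulability across a finite index subgroup via the commensurability invariance recorded after \cref{thm;groves_manning}.

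First I would invoke \cref{cor;alg_hyp}: since $G \rtimes_\phi \mathbb Z$ is hyperbolic, it contains no copy of $\mathbb Z^2$, so the hypotheses of \cref{prop;virtFP} are satisfied. This yields a \emph{characteristic} torsion-free finite index subgroup $G_0 \le G$ that is a free product of closed surface groups and free groups. Because $G_0$ is characteristic, the automorphism $\phi$ restricts to an automorphism $\phi_0 := \phi|_{G_0}$ of $G_0$, and the inclusion $G_0 \hookrightarrow G$ extends to an embedding
\[
G_0 \rtimes_{\phi_0} \mathbb Z \hookrightarrow G \rtimes_\phi \mathbb Z
\]
as a finite index subgroup. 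In particular, $G_0 \rtimes_{\phi_0} \mathbb Z$ inherits hyperbolicity from $G \rtimes_\phi \mathbb Z$.

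Next I would apply \cref{prop;tf_cubulable} to the pair $(G_0, \phi_0)$: since $G_0$ is a torsion-free hyperbolic group and $G_0 \rtimes_{\phi_0} \mathbb Z$ is hyperbolic, that theorem delivers a proper and cocompact action of $G_0 \rtimes_{\phi_0} \mathbb Z$ on a CAT(0) cube complex.

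Finally, to pass from the finite index subgroup to the whole group, I would use the remark after \cref{thm;groves_manning}: for hyperbolic groups, cubulability is a commensurability invariant (via the Bergeron--Wise boundary criterion together with Agol's theorem). Since $G \rtimes_\phi \mathbb Z$ is hyperbolic and contains the cubulable hyperbolic group $G_0 \rtimes_{\phi_0} \mathbb Z$ with finite index, it is itself cubulable. I don't expect a real obstacle here; the entire argument is a short chain of invocations, with \cref{prop;virtFP} and \cref{prop;tf_cubulable} doing all the genuine work, and the only mild subtlety being to note that \emph{characteristic} (rather than merely finite index) is what lets us restrict $\phi$ and form the finite index mapping torus inside $G \rtimes_\phi \mathbb Z$.
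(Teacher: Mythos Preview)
Your proposal is correct and matches the paper's proof essentially line for line: pass to the characteristic torsion-free finite index subgroup $G_0$ from \cref{prop;virtFP}, apply \cref{prop;tf_cubulable} to the finite index hyperbolic subgroup $G_0 \rtimes_{\phi|_{G_0}} \mathbb Z$, and then use commensurability invariance of cubulability for hyperbolic groups. You are simply more explicit than the paper about why ``characteristic'' is needed and why the finite index passage works; note also that you only need the trivial forward direction of \cref{cor;alg_hyp} (hyperbolic $\Rightarrow$ no $\mathbb Z^2$), not the converse.
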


Again, consider the finite index subgroup $G_0\rtimes_{\phi|_{G_0}} \mathbb{Z}$ of $G\rtimes_\phi \mathbb{Z}$, where $G_0$ is given by 
\cref{prop;virtFP}. $G_0\rtimes_{\phi|_{G_0}} \mathbb{Z}$ is cubulable by \cref{prop;tf_cubulable}, and hence, by \cite[Lem.~7.14]{Wise_book}, so is $G\rtimes_\phi \mathbb{Z}$.

{\small 
\bibliographystyle{alpha}
\bibliography{refs}

\begin{thebibliography}{DFWZ23}

\bibitem[Ago13]{Agol}
Ian Agol.
\newblock The virtual {Haken} conjecture (with an appendix by {Ian} {Agol},
  {Daniel} {Groves} and {Jason} {Manning}).
\newblock {\em Doc. Math.}, 18:1045--1087, 2013.

\bibitem[AM15]{AntolinMinasyan}
Yago Antol{\'{\i}}n and Ashot Minasyan.
\newblock Tits alternatives for graph products.
\newblock {\em J. Reine Angew. Math.}, 704:55--83, 2015.

\bibitem[BF95]{BestvinaFeighn}
Mladen Bestvina and Mark Feighn.
\newblock Stable actions of groups on real trees.
\newblock {\em Invent. Math.}, 121(2):287--321, 1995.

\bibitem[BFH97]{BFH}
Mladen {Bestvina}, Mark {Feighn}, and Michael {Handel}.
\newblock Laminations, trees, and irreducible automorphisms of free groups.
\newblock {\em Geom. Funct. Anal.}, 7(2):215--244, 1997.

\bibitem[BH92]{BH}
Mladen Bestvina and Michael Handel.
\newblock Train tracks and automorphisms of free groups.
\newblock {\em Ann. Math. (2)}, 135(1):1--51, 1992.

\bibitem[Bow98]{Bowditch}
Brian~H. Bowditch.
\newblock Cut points and canonical splittings of hyperbolic groups.
\newblock {\em Acta Math.}, 180(2):145--186, 1998.

\bibitem[Bri00]{Brinkmann}
Peter Brinkmann.
\newblock {\em Mapping Tori of Automorphisms of Hyperbolic Groups}.
\newblock PhD thesis, University of Illinois Urbana-Champaign, 2000.

\bibitem[BW12]{BW}
Nicolas Bergeron and Daniel~T. Wise.
\newblock A boundary criterion for cubulation.
\newblock {\em Am. J. Math.}, 134(3):843--859, 2012.

\bibitem[CLR94]{CLR}
Daryl Cooper, Darren~D. Long, and Alan~W. Reid.
\newblock Bundles and finite foliations.
\newblock {\em Invent. Math.}, 118(2):255--283, 1994.

\bibitem[DFWZ23]{DFWZ}
Sami {Douba}, Balthazar {Fl{\'e}chelles}, Theodore {Weisman}, and Feng {Zhu}.
\newblock {Cubulated hyperbolic groups admit Anosov representations}.
\newblock {\em arXiv:2309.03695}, 2023.

\bibitem[DL22]{DL}
Fran{\c{c}}ois Dahmani and Ruoyu Li.
\newblock Relative hyperbolicity for automorphisms of free products and free
  groups.
\newblock {\em J. Topol. Anal.}, 14(1):55--92, 2022.

\bibitem[DM]{DK23}
François {D}ahmani and Suraj~Krishna {M S}.
\newblock Cubulating a free-product-by-cyclic group.
\newblock {\em arXiv:2212.09869. To appear in Alg. Geom. Topol.}

\bibitem[Duf12]{Dufour}
Guillaume Dufour.
\newblock {\em Cubulations de variétés hyperboliques compactes.
  Mathématiques générales}.
\newblock PhD thesis, Université Paris Sud - Paris XI, 2012.

\bibitem[Dun85]{Dunwoody}
Martin~J. Dunwoody.
\newblock The accessibility of finitely presented groups.
\newblock {\em Invent. Math.}, 81(3):449--457, 1985.

\bibitem[EG20]{EGRelCube}
Eduard Einstein and Daniel Groves.
\newblock Relative cubulations and groups with a 2-sphere boundary.
\newblock {\em Compos. Math.}, 156(4):862--867, 2020.

\bibitem[FM15]{FM}
Stefano Francaviglia and Armando Martino.
\newblock Stretching factors, metrics and train tracks for free products.
\newblock {\em Ill. J. Math.}, 59(4):859--899, 2015.

\bibitem[GM]{GM}
Daniel {Groves} and Jason~F. {Manning}.
\newblock {Hyperbolic groups acting improperly}.
\newblock {\em arXiv:1808.02325. To appear in Geom. Topol.}

\bibitem[Hor17]{Horbez_The_Boundary}
Camille Horbez.
\newblock The boundary of the outer space of a free product.
\newblock {\em Isr. J. Math.}, 221(1):179--234, 2017.

\bibitem[HW08]{Haglund_Wise}
Fr\'{e}d\'{e}ric Haglund and Daniel~T. Wise.
\newblock Special cube complexes.
\newblock {\em Geom. Funct. Anal.}, 17(5):1551--1620, 2008.

\bibitem[HW15a]{HW15}
Mark~F. Hagen and Daniel~T. Wise.
\newblock Cubulating hyperbolic free-by-cyclic groups: the general case.
\newblock {\em Geom. Funct. Anal.}, 25(1):134--179, 2015.

\bibitem[HW15b]{HW}
Tim Hsu and Daniel~T. Wise.
\newblock Cubulating malnormal amalgams.
\newblock {\em Invent. Math.}, 199(2):293--331, 2015.

\bibitem[HW16]{HW16}
Mark~F. Hagen and Daniel~T. Wise.
\newblock Cubulating hyperbolic free-by-cyclic groups: the irreducible case.
\newblock {\em Duke Math. J.}, 165(9):1753--1813, 2016.

\bibitem[KM12]{KM}
Jeremy Kahn and Vladimir Markovic.
\newblock Immersing almost geodesic surfaces in a closed hyperbolic three
  manifold.
\newblock {\em Ann. Math. (2)}, 175(3):1127--1190, 2012.

\bibitem[KPS73]{KPS}
Abraham Karrass, Alfred Pietrowski, and Donald Solitar.
\newblock Finite and infinite cyclic extensions of free groups.
\newblock {\em J. Austral. Math. Soc.}, 16:458--466, 1973.
\newblock Collection of articles dedicated to the memory of Hanna Neumann, IV.

\bibitem[Mar07]{Martino}
Armando Martino.
\newblock A proof that all {S}eifert 3-manifold groups and all virtual surface
  groups are conjugacy separable.
\newblock {\em J. Algebra}, 313(2):773--781, 2007.

\bibitem[Mit98]{MahanCTmaps}
Mahan Mitra.
\newblock Cannon-{T}hurston maps for hyperbolic group extensions.
\newblock {\em Topology}, 37(3):527--538, 1998.

\bibitem[MZ16]{MinasyanZalesskii}
Ashot Minasyan and Pavel Zalesskii.
\newblock Virtually compact special hyperbolic groups are conjugacy separable.
\newblock {\em Comment. Math. Helv.}, 91(4):609--627, 2016.

\bibitem[Nie17]{Nielsen}
Jakob Nielsen.
\newblock Die {I}somorphismen der allgemeinen, unendlichen {G}ruppe mit zwei
  {E}rzeugenden.
\newblock {\em Math. Ann.}, 78(1):385--397, 1917.

\bibitem[Pet97]{pettet_finite_out}
Martin~R. Pettet.
\newblock Virtually free groups with finitely many outer automorphisms.
\newblock {\em Trans. Amer. Math. Soc.}, 349(11):4565--4587, 1997.

\bibitem[Sel97]{SelaStructure}
Zlil Sela.
\newblock Structure and rigidity in ({G}romov) hyperbolic groups and discrete
  groups in rank {$1$} {L}ie groups. {II}.
\newblock {\em Geom. Funct. Anal.}, 7(3):561--593, 1997.

\bibitem[Sta71]{Stallings}
John Stallings.
\newblock {\em Group theory and three-dimensional manifolds}, volume~4 of {\em
  Yale Mathematical Monographs}.
\newblock Yale University Press, New Haven, Conn.-London, 1971.
\newblock A James K. Whittemore Lecture in Mathematics given at Yale
  University, 1969.

\bibitem[Thu82]{Thurston3dmflds}
William~P. Thurston.
\newblock Three-dimensional manifolds, {K}leinian groups and hyperbolic
  geometry.
\newblock {\em Bull. Amer. Math. Soc. (N.S.)}, 6(3):357--381, 1982.

\bibitem[Wis21]{Wise_book}
Daniel~T. Wise.
\newblock {\em The structure of groups with a quasiconvex hierarchy}, volume
  209 of {\em Ann. Math. Stud.}
\newblock Princeton, NJ: Princeton University Press, 2021.

\end{thebibliography}
}
\sc{F. D. Institut Fourier, Univ. Grenoble Alpes, Grenoble, France.\\   IRL-CRM  CNRS, Universit\'e de Montr\'eal, Montréal, Canada.}

\sc{S. K. M. S. Faculty of Mathematics, Technion -- Israel Institute of Technology, Haifa, Israel}

\sc{J. P. M. Department of Mathematics, Princeton University, Princeton, NJ, USA}

\end{document}